\newtheorem{proposition}{Proposition}
\newtheorem*{lemma*}{Lemma} 
\theoremstyle{remark}
\newtheorem*{remark*}{Remark}
\newenvironment{enumalph}
{\begin{enumerate}}
{\end{enumerate}}
\def\ZZ{\mathbb Z}
\def\QQ{\mathbb Q}
\def\OO{\mathcal{O}}   
\def\epsilon{\varepsilon}
\def\phi{\varphi}
\def\F{\mathbb F}
\def\order#1{\vert{#1}\vert}
\DeclareMathOperator{\iso}{\simeq}
\DeclareMathOperator{\Gal}{Gal}
\DeclareMathOperator{\C}{Cl}  
\DeclareMathOperator{\Cplus}{Cl^{\textup{st}}}
\begin{document}

\title[Parity of class numbers and unit signature ranks]
{A Note on the equivalence of the parity of class numbers and the signature ranks of units in cyclotomic fields}

\author{David S.\ Dummit}

\address{Department of Mathematics, University of Vermont, Lord House, 16 Colchester Ave., Burlington, VT 05405, USA}

\email{dummit@math.uvm.edu}

\dedicatory{Dedicated to the memory of my teacher Kenkichi Iwasawa}

\begin{abstract}
We collect some statements regarding equivalence of the parities of various
class numbers and signature ranks of units in prime power cyclotomic fields.  
We correct some misstatements in the literature regarding these parities by providing
an example of a prime cyclotomic field where the signature rank of the units and the signature rank of the
circular units are not equal.  
\end{abstract}

\keywords{class numbers, cyclotomic units}

\subjclass[2010]{11R18 (primary), and 11R27, 11R29 (secondary)} 

\maketitle

\section{Introduction}

Let $p$ be a prime and $n \ge 1$ a fixed integer (with $n \ge 2$ if $p = 2$).  
Let $\zeta_{p^n}$ denote a primitive $p^n$-th root of unity, 
$K = \QQ(\zeta_{p^n})$ the corresponding cyclotomic field of $p^n$-th roots of unity, 
and $K^+ = \QQ( \zeta_{p^n} + \zeta_{p^n}^{-1})$ the maximal totally real subfield of $K$.

\medskip

Denote by $\C(K)$ the class group of $K$, by $\C(K^+)$ the class group of $K^+$, and  
by $\Cplus(K^+)$ the strict (or narrow) class group of $K^+$.

\medskip

Let $E$ denote the group of {\it real} units of $K$, i.e., the units of the maximal real subfield $K^+$ 
(the group of units of $K$ is then $\langle \zeta_{p^n} \rangle \times E$), and let $E^+$ denote the totally
positive units of $K^+$ (or, by abuse, of $K$).

\medskip

Let $C$ denote the subgroup of {\it circular} (or {\it cyclotomic}) {\it units} of $E$ (see \cite[Lemma 8.1]{Wa}),
whose finite index in $E$ is the class number $\order{\C(K^+)}$ (\cite[Theorem 8.2]{Wa}).  Let 
$C^+$ denote the subgroup of totally positive circular units (so $C^+ = C \cap E^+$).

\medskip

The Galois group $\Gal(K/K^+)$ is generated by complex conjugation, which, following Iwasawa, will be
denoted by $J$.  Let $\C^-(K)$, the {\it minus part} of the class group, 
denote the kernel of $1 + J$ acting on $\C(K)$.  Similarly, let $\C^+(K)$ 
denote the kernel of $1 - J$ acting on $\C(K)$.
By class field theory, the class number of $K^+$, $\vert \C(K^+) \vert$,
divides the class number of $K$ and the norm map from $\C(K)$ to $\C(K^+)$ 
is surjective, with kernel $\C^-(K)$, so $\vert \C(K) \vert = \vert \C^-(K) \vert \vert \C(K^+) \vert$.  The
factor $\vert \C^-(K) \vert$ is called the {\it relative} class number of $K$.  
(Warning: the group $\C(K^+)$ embeds into $\C^+(K)$, but $\C^+(K)$ may be strictly larger. 
Classical terminology refers to $\vert \C(K^+) \vert$ (and not
$\vert \C^+(K) \vert$) as the ``plus part" of the class number of $K$ (often as ``$h^+$"), so to 
avoid confusion we shall avoid this terminology.)

\medskip

Equivalencies for the parity of the orders of the various class groups and relations with signature ranks are known
and due to various authors, some beginning as far back as the late 1800's with Kummer and Weber, with
the first systematic study perhaps due to Hasse (\cite{Ha}).  
These equivalencies are summarized in the following proposition.  For the convenience of the reader, 
concise proofs for these equivalencies are given later.

\begin{proposition} \label{prop:equivalences}
With $K = \QQ(\zeta_{p^n})$, $K^+ = \QQ( \zeta_{p^n} + \zeta_{p^n}^{-1})$, and other 
notation as above, the following statements are equivalent:
\begin{enumerate}

\item
The class number of $K$, $\order{ \C(K) } $, is odd.

\item
The {\it relative class number} of $K$, $\order{ \C^-(K) } $, is odd.

\item
The order of $ \C^+(K)$, $\order{ \C^+(K) } $, is odd.

\item
The strict class number of $K^+$, $\order{ \Cplus(K^+) } $, is odd.

\item
One of the following equivalent conditions (a1)--(a3), together with one of the following equivalent conditions (b1)--(b3), holds: 
\begin{enumerate}[leftmargin= .6in]
\item [(a1)] 
the class number of $K^+$, $\order{ \C(K^+) } $, is odd, 
\item [(a2)]
the index $[E:C]$ is odd, 
\item [(a3)]
$C \cap E^2 = C^2$, i.e., every circular unit which is a square (in $K^+$) is the square of a circular unit,
\end{enumerate}
and 
\begin{enumerate}[leftmargin= .65in]
\item [(b1)]
the class number and strict class number of $K^+$ are equal,
\item [(b2)]
every totally positive unit in $E$ is a square in $E$: $E^+ = E^2$, 
\item [(b3)]
there are units of $K^+$ of every possible signature.
\end{enumerate}

\item
There are circular units of $K^+$ of every possible signature (equivalently, every
totally positive circular unit is the square of a circular unit: $C^+ = C^2$).

\end{enumerate}

\end{proposition}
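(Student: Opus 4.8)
The plan is to prove that statement (6) is equivalent to statement (5), since the equivalences among (1)--(5) (and the internal equivalences (a1)--(a3) and (b1)--(b3)) are established in the preceding arguments. Throughout write $r = [K^+:\QQ]$, so that $K^+$ has exactly $r$ real places and the signature map $\operatorname{sgn}\colon E \to \{\pm 1\}^r \iso \F_2^r$ is defined. Since $-1 \in C \subseteq E$ and both $C$ and $E$ have free rank $r-1$ by Dirichlet's theorem (torsion $\{\pm 1\}$ in each), the $\F_2$-vector spaces $C/C^2$ and $E/E^2$ each have dimension $r$. I will identify the \emph{signature rank} of a group of units with the $\F_2$-dimension of the image of $\operatorname{sgn}$, so that ``units of every possible signature'' means this image is all of $\F_2^r$.

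First I would dispose of the parenthetical reformulation in (6). The kernel of $\operatorname{sgn}$ restricted to $C$ is exactly the totally positive circular units $C \cap E^+ = C^+$, so $\operatorname{sgn}$ induces an injection $C/C^+ \hookrightarrow \F_2^r$ with image $\operatorname{sgn}(C)$; hence circular units of every signature exist precisely when $[C:C^+] = 2^r$. Because $C^2 \subseteq C^+ \subseteq C$ and $[C:C^2] = 2^r$, multiplicativity $[C:C^+]\,[C^+:C^2] = 2^r$ shows $[C:C^+] = 2^r$ if and only if $C^+ = C^2$. This yields the stated equivalence ``every signature'' $\Leftrightarrow$ $C^+ = C^2$ by a clean index count, with no further input.

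The heart of the matter is comparing $\operatorname{sgn}(C)$ with $\operatorname{sgn}(E)$ through the inclusion-induced map $\iota\colon C/C^2 \to E/E^2$. Recall (already established as (a1)$\Leftrightarrow$(a3)) that $[E:C] = \order{\C(K^+)}$, and that localizing the sequence $0 \to C \to E \to E/C \to 0$ at $2$ shows this index is odd if and only if $\iota$ is an isomorphism, which is exactly condition (a3) that $C \cap E^2 = C^2$. For (6) $\Rightarrow$ (5): if $\operatorname{sgn}(C) = \F_2^r$, then $\operatorname{sgn}\colon C/C^2 \to \F_2^r$ is a surjection between $r$-dimensional spaces, hence injective; as it factors through $\iota$, the map $\iota$ is injective and therefore an isomorphism, giving (a). Moreover $\operatorname{sgn}(C) \subseteq \operatorname{sgn}(E) \subseteq \F_2^r$ forces $\operatorname{sgn}(E) = \F_2^r$, which is (b3), giving (b); thus (5) holds. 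Conversely, assuming (5), condition (a) makes $\iota$ an isomorphism, so $\operatorname{sgn}$ has the same image on $C$ as on $E$, i.e. $\operatorname{sgn}(C) = \operatorname{sgn}(E)$, while (b3) gives $\operatorname{sgn}(E) = \F_2^r$; hence $\operatorname{sgn}(C) = \F_2^r$, which is (6).

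The main obstacle---and the conceptual point on which the paper turns---is precisely the step identifying $\operatorname{sgn}(C)$ with $\operatorname{sgn}(E)$: this identification is licensed only by hypothesis (a), that the circular units already generate $E/E^2$. Without (a) the inclusion $\operatorname{sgn}(C) \subseteq \operatorname{sgn}(E)$ may be strict, so the signature rank of the circular units can be strictly smaller than that of the full unit group; exhibiting such a discrepancy is the counterexample promised in the abstract. Thus the delicate part is not the formal dimension-counting but recognizing that condition (a) is exactly what permits transferring signature information between $C$ and $E$, and that this permission genuinely fails in general.
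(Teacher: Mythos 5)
Your proof of the equivalence $(5)\Leftrightarrow(6)$ is correct, and it is in substance the paper's own argument recast in terms of the signature map: where you argue with injectivity/surjectivity of $C/C^2 \to E/E^2$ and dimension counting in $\F_2^r$, the paper uses the chain of inclusions $C^2 \subseteq C \cap E^2 \subseteq C \cap E^+ = C^+$ together with the index count $2^{\phi(p^n)/2} = [C:C^2] = [C:C^+][C^+:C^2]$; these are the same computation. Your sketch of (a2)$\Leftrightarrow$(a3) via the sequence $0 \to C \to E \to E/C \to 0$ is likewise sound and matches the paper's index argument.

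The genuine gap is one of scope: the proposal proves only this one equivalence. You assert at the outset that the equivalences among (1)--(5), and the internal equivalences (a1)--(a3) and (b1)--(b3), ``are established in the preceding arguments,'' but nothing preceding the proposition establishes them --- in the paper they are precisely what the proof of this proposition must supply. Concretely, you are missing: (i) $(1)\Leftrightarrow(2)\Leftrightarrow(3)$, which the paper proves by showing the Sylow $2$-subgroups $S^+(K)$ and $S^-(K)$ (kernels of $1-J$ and $1+J$) have equal $2$-rank, so one is trivial iff the other is, and then using $S(K)/S^-(K) = S(K)^{1+J} \subseteq S^+(K)$; (ii) $(1)\Leftrightarrow(4)$, which rests on the key lemma that for a Galois extension of $2$-power degree unramified outside infinity and a single totally ramified finite prime, $2$ divides the strict class number of the top field iff it divides that of the bottom field (proved via the strict Hilbert class field, the fixed-point property of $2$-groups acting on $2$-groups, and passing to the fixed field of inertia); (iii) $(4)\Leftrightarrow(5)$ and (b1)$\Leftrightarrow$(b2), which depend on the formula $\order{\Cplus(K^+)} = \order{\C(K^+)}\,[E^+:E^2]$; and (iv) (b2)$\Leftrightarrow$(b3). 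None of these is a formality --- item (ii) in particular is the main arithmetic input, exploiting that $p$ is totally ramified in $K^+/\QQ$ --- so as it stands the proposal establishes only a small portion of the proposition.
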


\begin{remark*}
All the statements of the proposition are known to hold when $p = 2$, a result 
due to Weber (\cite[B, p.~821]{We}).  A nice proof of this result by Iwasawa (in the form
of condition (1): ``If $p = 2$,...the class number of $Z_e$ ($e \ge 2$) is...odd") 
can be found in \cite[p.~373]{Iw1}.

\end{remark*}

\begin{remark*}
A number of the implications in the proposition hold for more general fields, with many
of the results in the literature extending to various degrees the results in Hasse's
seminal work \cite{Ha}.  While not exhaustive, particular attention is called to 
the papers by Cornacchia (\cite{C1}, \cite{C2}, \cite{C3}), Garbanati (\cite{Ga1}, \cite{Ga2}), 
G.~Gras and M-N.~Gras (\cite{Gr}, \cite{Gr-Gr}), 
Oriat (\cite{O}),
Stevenhagen (\cite{St}) and the further references they contain.  
\end{remark*}

\medskip

The class number of $K = \QQ( \zeta_{29})$ is 8 and the class number of $K^+ = \QQ( \zeta_{29} + \zeta_{29}^{-1})$ is 1
(\cite[Tables, \S 3, p.~412 and \S 4, p.~421]{Wa},
so for this field the equivalent conditions (a1)--(a3) in (5) are satisfied, but (1) does not hold---hence also
the other statements in Proposition \ref{prop:equivalences} do not hold.  
(It is also known that (6) does not hold by the tables of Davis \cite[p.~70]{Da1}.)
This shows that the equivalent conditions (b1)--(b3) in (5) cannot be dropped (so in particular the
equivalent conditions (a1)--(a3) in (5) do not imply the conditions (b1)--(b3)).  

The purpose of this Note is, in addition to collecting the equivalencies of the proposition above 
in one place, to show (in the following section) that the units in the maximal real subfield of the cyclotomic field of
163rd roots of unity realize all possible signatures but the class number of $\QQ(\zeta_{163})$ is even,
as is the relative class number ``$h_{163}^-$".  Hence for $K = \QQ(\zeta_{163})$, the equivalent conditions (b1)--(b3) in (5)
are satisfied, but the remaining statements in Proposition \ref{prop:equivalences} are not, which shows that
the equivalent conditions (a1)--(a3) in (5) cannot be dropped (so in particular the
equivalent conditions (b1)--(b3) in (5) do not imply the conditions (a1)--(a3)).
This provides a counterexample to  
the assertion that the circular units can be replaced by the full group of real units in equivalence (6) of 
Proposition \ref{prop:equivalences}, an error
that has appeared and propagated in the literature.

\medskip

In \cite{EMP} the authors state that a classical result of Kummer is that every totally positive
unit of $\QQ(\zeta_p + \zeta_p^{-1})$ is a square whenever the class number of $\QQ(\zeta_p)$ is odd (which is part of
the implication (1) implies (5) above), but go on to assert that ``as a result 
of Shimura" (for which they cite \cite{Sh}) ``this is now extended to every totally positive
unit of $\QQ(\zeta_p + \zeta_p^{-1})$ is a square if and only if the class number of $\QQ(\zeta_p)$ is odd." 

In \cite{Es}, the author makes a similar statement that ``With $n$ a prime power, the totally positive
units in $\OO_n^+$ [the integers of the maximal real subfield of the $n$-th roots of unity] are squares of units from $\OO_n^+$
if and only if $h_n^-$ [the relative class number of $\QQ(\zeta_n)$] is odd", citing
Lemma 5 and Theorem 3 in \cite{Ga1}.  

It should be noted that Shimura makes no claim as asserted, in fact stating only that the converse
holds (in a more general setting of imaginary abelian fields of prime power conductor) {\it under 
the additional assumption\/} that the class number of $\QQ(\zeta_p + \zeta_p^{-1})$ is odd
(indicating in a footnote that this was kindly pointed out to him by Iwasawa) \cite[Proposition A.5 and
following, Appendix, p.~83]{Sh}.  Similarly, the link between the signatures of the subgroup of circular units with
the signatures of the full group of units in Lemma 5 of \cite{Ga1} requires the class number of 
$\QQ(\zeta_p + \zeta_p^{-1})$ to be {\it odd\/}.

More recently, this error appears in \cite{K-L}\footnote{There 
is also a gap in the proof of Theorem 3.3 in this paper: the ``$10(r+1)$" in the proof of Lemma 3.2 should
be $10 r + 11$, which need not be even.},
where the authors assert that the ``Taussky conjecture" is that ``every totally 
positive unit of $\QQ(\zeta_q + \zeta_q^{-1})$
is a square" in the case that both $q$ and $p = (q-1)/2$ are primes, stating explicitly that this
is equivalent to the oddness of the class number of $\QQ(\zeta_q)$ (citing \cite{EMP} for the equivalence).
The {\it correct\/} conjecture 
(which as noted by Stevenhagen \cite{St} appears explicitly in print only in the Ph.D.~dissertation and 
subsequent paper of Taussky's 
student Davis (\cite[p.~4]{Da1}, \cite{Da2}) without attribution to Taussky---but note Davis
references \cite{Ta}) is that ``every totally 
positive {\it circular} unit of $\QQ(\zeta_q + \zeta_q^{-1})$
is the square of a {\it circular} unit when $q$ and $p = (q-1)/2$ are both primes".
The terminology ``Taussky's conjecture" in \cite{K-L} is apparently drawn from the discussion in their 
reference \cite{Es}, so either \cite{EMP} and \cite{Es} could be the source of the confusion regarding the
correct conjecture.

\section{The cyclotomic field of 163rd roots of unity}

\begin{proposition}
If $F = \QQ(\zeta_{163})$ is the cyclotomic field of 163rd roots of unity and $F^+ = \QQ(\zeta_{163} +\zeta_{163}^{-1} )$ is
its maximal real subfield, then
\begin{enumalph}
\item
the units of $F^+$ have all possible signatures (i.e., every totally positive unit of $F^+$ is the square of a unit in $F^+$),
while the subgroup of squares of circular units of $F^+$ have index 4 in the group of totally positive circular units of $F^+$ ;

\item
the class number and strict class number of $F^+$ are equal and divisible by 4 (with both equal to 4 under the GRH), 
the power of 2 in the relative class number of $F$ is 4 and the class number of $F$ is divisible by 16 
(with precise 2-power divisor equal to 16 under the GRH).

\end{enumalph}
In particular, every totally positive unit of $F^+$ a square does not imply that the class number (nor, equivalently, the relative class number)
of $F$ is odd.

\end{proposition}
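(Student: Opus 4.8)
The plan is to reduce the whole statement to two explicit computations of signature ranks, together with the classical exact formula for the relative class number, and then to feed the outcomes into the equivalences of Proposition~\ref{prop:equivalences}. Throughout write $r_1 = [F^+:\QQ] = 81$ for the number of real places and regard the signature map as a homomorphism $\operatorname{sgn}\colon E \to \F_2^{r_1}$, abbreviating the image of a subgroup $U \le E$ by its $\F_2$-rank. Since $\operatorname{sgn}$ factors through $U/U^2$ and $\order{E/E^2} = \order{C/C^2} = 2^{r_1}$, a subgroup realizes all signatures precisely when this rank equals $r_1$.

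First I would work with the circular units directly: writing down an explicit basis of $C$ and reducing the associated $81 \times 81$ matrix of signs over $\F_2$, I expect to find signature rank $79$, equivalently $[C^+:C^2] = 4$. This is the first assertion of part (a), and via the equivalence (6)$\iff$(1) of Proposition~\ref{prop:equivalences} it already shows $\order{\C(F)}$ is even. Next, and this is the crux, I would certify that the \emph{full} unit group realizes all $81$ signatures. Rather than compute a full system of fundamental units of the degree-$81$ field $F^+$, I would exhibit finitely many genuine (non-circular) units $u_1, u_2 \in E$ — concretely, square or higher $2$-power roots in $F^+$ of suitable circular units — and compute the signature rank of $E' = \langle C, u_1, u_2 \rangle$. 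Because $\operatorname{sgn}(E') \subseteq \operatorname{sgn}(E) \subseteq \F_2^{r_1}$, finding that $E'$ already has signature rank $r_1 = 81$ forces $E$ to have signature rank $81$ as well; hence $E^+ = E^2$, which is part (a) for the full units and, by the equivalence (b3)$\iff$(b1), gives $\order{\C(F^+)} = \order{\Cplus(F^+)}$.

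With $E^+ = E^2$ in hand I would extract the $2$-divisibility of $\order{\C(F^+)} = [E:C]$ formally. Since $E^+ = E^2$, the induced signature map on $E/E^2$ is injective, so the inclusion $\iota\colon C/C^2 \to E/E^2$ has the same rank, $79$, as the signature map on $C$; as $\ker \iota$ and $\operatorname{coker}\iota \cong (E/C)\otimes \F_2$ have equal dimension, the $2$-rank of $E/C$ is $r_1 - 79 = 2$, whence $(\ZZ/2\ZZ)^2 \hookrightarrow E/C$ and $4 \mid \order{\C(F^+)}$. Independently, the relative class number $\order{\C^-(F)}$ is given by the classical exact (Bernoulli, or Maillet determinant) formula, from which I would read off $v_2(\order{\C^-(F)}) = 2$; this step is unconditional. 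Since $\order{\C(F)} = \order{\C^-(F)} \cdot \order{\C(F^+)}$, the $2$-adic valuations add to give $v_2(\order{\C(F)}) \ge 2 + 2 = 4$, i.e. $16 \mid \order{\C(F)}$. Under GRH one may compute $\order{\C(F^+)} = 4$ exactly, which then pins $v_2(\order{\C(F)}) = 4$ and completes part (b).

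The final statement is then immediate: part (a) gives $E^+ = E^2$ (every totally positive unit of $F^+$ is a square, condition (b2)), while part (b) gives that $\order{\C(F)}$ and $\order{\C^-(F)}$ are both even; thus (b2) holds but (1) and (2) of Proposition~\ref{prop:equivalences} fail, exhibiting $F = \QQ(\zeta_{163})$ as the desired counterexample. I expect the main obstacle to be the unconditional certification that $E$ realizes all $81$ signatures: the circular units miss a $2$-dimensional space of signatures, and the real work is to produce explicit non-circular units — roots of circular units lying in $F^+$ — that recover it, since a direct computation of the unit group in degree $81$ is delicate and the exact value $\order{\C(F^+)} = 4$ itself appears to require GRH.
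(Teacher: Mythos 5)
Your plan is logically sound and its overall skeleton matches the paper's: establish signature rank $79$ for the circular units (giving $[C^+:C^2]=4$), augment $C$ by a couple of auxiliary units to certify full signature rank $81$ for $E$ (giving $E^+=E^2$ and hence $\C(F^+) = \Cplus(F^+)$), read off $v_2(\order{\C^-(F)})=2$ from the known factorization of the relative class number, and combine via $\order{\C(F)} = \order{\C^-(F)}\,\order{\C(F^+)}$. But you diverge from the paper at the two substantive points, and the comparison is instructive. For the auxiliary units, the paper does not hunt for $2$-power roots of circular units: it uses the cyclic cubic subfield $k^+ = \QQ(\alpha)$, $\alpha = \textup{Tr}_{F^+/k^+}(\zeta_{163}+\zeta_{163}^{-1})$ with minimal polynomial $x^3+x^2-54x-169$, whose fundamental units $\epsilon_1 = \alpha+4$, $\epsilon_2 = \alpha^2-4\alpha-34$ are already tabulated (LMFDB) and whose signs in $F^+$ are essentially free to compute because $\alpha$ is a trace; adjoining these to $C$ gives the rank-$81$ matrix. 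Your route is riskier: the existence in $F^+$ of square roots of representatives of $C^+/C^2$ is \emph{equivalent} to $C^+ \subseteq E^2$, i.e., to a consequence of the very statement being proved, so your search is only guaranteed to succeed a posteriori, and even then only under GRH (when $E = \langle C, u_1, u_2\rangle$) is it automatic that those particular roots span the missing signature space; moreover root extraction in a degree-$81$ field is much heavier than evaluating two explicit cubic units. For the divisibility $4 \mid \order{\C(F^+)}$, the paper argues unconditionally and independently of any signature computation: $h(k^+)=4$ and $F^+/k^+$ is totally ramified, so class field theory pushes the class number up. Your alternative—since $E^+ = E^2$ makes $\operatorname{sgn}\colon E/E^2 \to \F_2^{81}$ an isomorphism, the inclusion $C/C^2 \to E/E^2$ has rank $79$, so $(E/C)\otimes\F_2$ has dimension $2$ and $4 \mid [E:C] = \order{\C(F^+)}$—is correct and a genuinely different (and rather elegant) derivation, but note it is contingent on your harder step ($E^+=E^2$) and on the rank being \emph{exactly} $79$, whereas the paper's push-up argument stands on its own. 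In short: your proof would work if the computations come out as expected, but the paper's choice of the cubic subfield simultaneously supplies cheap, provably correct auxiliary units and an unconditional class-number divisibility, which is precisely the trick your plan is missing.
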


\begin{proof}
The tables of Davis \cite[p.~71]{Da1} show that the rank of the $81 \times 81$ matrix of signatures of the 
circular units in $F^+$ is 79, i.e., $[C: C^+] = 2^{79}$ and $[C:C^2] = 2^{81}$, which gives the second statement in (a).
The relative class number of $F$ given in \cite[Tables, \S 3, p.~415]{Wa} is
$2^2 \cdot 181 \cdot 23167 \cdot 365473 \cdot 441845817162679 $.

Under the GRH the class number of $F^+$ is 4 by \cite{VdL} (see also \cite{Sc}, whose tables are
reproduced in \cite[Tables, \S 4, p.~420]{Wa}).

The field $F^+$ is a cyclic extension of degree 81 over $\QQ$ with cyclic cubic subfield $k^+ = \QQ(\alpha)$ where
$\alpha = \textup{Tr}_{F^+/k} (\zeta_{163} + \zeta_{163}^{-1})$, whose minimal polynomial over $\QQ$ is
$x^3 + x^2 -54 x - 169$.  The class number of $k^+$ is 4 and $\epsilon_1 = \alpha + 4$, $\epsilon_2 =\alpha^2 - 4 \alpha - 34$ are
fundamental units for $k^+$ (\cite[3.3.26569.1]{LMFDB}, but note the database uses $-\alpha$ as generator).  

Since $F^+/k^+$ is totally ramified, it follows that the class number of $F^+$ is divisible by 4 (and equal to 4
under the GRH, as noted above).  Then the class number of $F$, which is the product of the class number of $F^+$ with
the relative class number of $F$, is divisible by 16 (with precise 2-power divisor equal to 16 under the GRH).

It remains to show that the units of $F^+$ have all possible signatures, as this also shows the class number and strict class
number of $F^+$ are the same.  

The units of $F^+$ contain the subgroup $\langle \epsilon_1, \epsilon_2 , C \rangle$ generated by the units of $k^+$ together
with the circular units of $F^+$.  Adding the signatures of $\epsilon_1$ and $\epsilon_2$ as elements of $F^+$ (which are easily computed
since $\alpha$ is a trace) to the signature matrix for $C$
computed as in \cite{Da1} produces a matrix of full rank 81, so the full group of units of $F^+$ also has maximal signature rank,
completing the proof.  
\end{proof}

\begin{remark*}
If the class number of $F^+ =  \QQ(\zeta_{163} +\zeta_{163}^{-1} )$ is indeed equal to 4 as expected, then 
the index of the circular units in the units of $F^+$ is 4.
Since the computation of the rank of the group of signatures shows $C$ has index 4 in $\langle \epsilon_1, \epsilon_2 , C \rangle$,
it would follow that $\langle \epsilon_1, \epsilon_2 , C \rangle$ is the full group of units of $F^+$.
\end{remark*}

\begin{remark*}
The cyclic subfield $k = k^+ (\sqrt{-163})$ of degree 6 contained in $F$
has class group $\C(k)$ isomorphic to $ (\ZZ / 2 \ZZ)^4$ 
(\cite[6.0.115063617043.1]{LMFDB}. 
The class group of $k^+$ is isomorphic to $(\ZZ / 2 \ZZ)^2$ with the cyclic group $\Gal(k^+/\QQ)$ of order 3 acting by its unique
irreducible 2-dimensional representation over $\F_2$ (the finite field of order 2).    
Also, $\C(k)/\C^-(k) \iso \C(k)^{1 + J}$, which by class field theory is isomorphic to $\C(k^+) \iso (\ZZ / 2 \ZZ)^2$.  It follows
that $\C(k)^-$ (which is the same as $\C(k)^+$ since $\C(k)$ has exponent 2) is
isomorphic to $(\ZZ / 2 \ZZ)^2$. 
This implies that $\C(k)$ is isomorphic as a Galois module to the direct sum of two copies of the (unique) 
irreducible 2-dimensional representation of the cyclic group $\Gal(k^+/\QQ)$ of order 3 over $\F_2$,
where $J$ acts by interchanging the two copies.  
Then composing the Hilbert class field of $k$ with $F = \QQ(\zeta_{163})$ shows (under the assumption of the GRH) that the 
Sylow 2-subgroup of $\C(F)$ is isomorphic as a Galois module to the direct sum of two copies of the (unique) 
irreducible 2-dimensional representation of the cyclic group $\Gal(F^+/\QQ)$ of order 81 over $\F_2$,
where $J$ acts by interchanging the two copies.  
\end{remark*}

\section{Proofs of the parity equivalences}

Before giving some concise proofs for the equivalencies in Proposition \ref{prop:equivalences} 
we state a variant of a theorem of Iwasawa \cite{Iw1} that is quite useful (for example, \cite[Proposition 2.1]{N}
is an immediate consequence).

\begin{lemma*}
Suppose $L/F$ is any finite Galois extension of number fields such that 
\begin{enumerate}
\item[(i.)]
the (not necessarily abelian) Galois group $\Gal(L/F)$ has order a power of 2, and

\item[(ii.)]
the extension is unramified outside infinity and a single finite prime where the finite prime is totally ramified.

\end{enumerate}

Then
2 divides the strict class number of $L$ if and only 2 divides the strict class number of $F$. 

\end{lemma*}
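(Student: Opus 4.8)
The plan is to treat the two implications separately, after first reducing to the case $[L:F]=2$. Since $\Gal(L/F)$ is a $2$-group it admits a chain of subgroups, each normal in $\Gal(L/F)$ and of index $2$ in the next, whose fixed fields give a tower $F = F_r \subset \cdots \subset F_0 = L$ in which every step $F_i/F_{i+1}$ is cyclic of degree $2$. Total ramification at the finite prime $\mathfrak p$ in $L/F$ forces a unique prime above $\mathfrak p$ at each level, totally ramified in each step, with no other finite prime ramifying; thus hypotheses (i) and (ii) are inherited by every step. Hence it suffices to prove the equivalence when $\Gal(L/F) = \langle \sigma \rangle$ is cyclic of order $2$, the general statement following by transitivity along the tower.

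For the implication that $2$ divides the strict class number of $F$ implies $2$ divides that of $L$, I would use the norm map $N \colon \Cplus(L) \to \Cplus(F)$. By class field theory its cokernel is isomorphic to $\Gal\bigl((L \cap H^+(F))/F\bigr)$, where $H^+(F)$ is the narrow Hilbert class field of $F$; but $H^+(F)/F$ is unramified at all finite primes while $L/F$ is totally ramified at $\mathfrak p$, so $L \cap H^+(F) = F$ and $N$ is surjective. Consequently $\order{\Cplus(F)}$ divides $\order{\Cplus(L)}$, which gives this direction.

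For the converse I argue by contraposition: assuming the strict class number of $F$ is odd, I show that of $L$ is odd. Because $\langle\sigma\rangle$ is a $2$-group acting on the finite abelian group $\Cplus(L)$, a nontrivial $2$-Sylow subgroup of $\Cplus(L)$, being $\sigma$-stable, would have nontrivial fixed points, so it is enough to prove $\order{\Cplus(L)^{\langle\sigma\rangle}}$ is odd. Here I would invoke the narrow (strict) analogue of Chevalley's ambiguous class number formula,
\[
\order{\Cplus(L)^{\langle\sigma\rangle}} = \frac{\order{\Cplus(F)} \cdot \prod_{v \text{ finite}} e_v}{2 \cdot [E_F^+ : E_F^+ \cap N_{L/F} L^\times]},
\]
where $E_F^+$ is the group of totally positive units of $F$ and $e_v$ the ramification index in $L/F$. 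Since $\mathfrak p$ is the only ramified finite prime, the product equals $e_{\mathfrak p}=2$, and the formula collapses to $\order{\Cplus(F)}$ divided by the displayed unit index.

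The crux is then to show that index is trivial, i.e.\ that every totally positive unit of $F$ is a norm from $L$, and this is where the single-ramified-prime hypothesis does the work. For a totally positive unit $u$ the local norm residue symbol $(u, L/F)_v$ is trivial at every infinite place (since $u>0$ there and $L_w/F_v$ is $\mathbb C/\mathbb C$, is split, or is $\mathbb C/\mathbb R$ with $N\mathbb C^\times = \mathbb R_{>0}$) and at every unramified finite place (units being local norms in an unramified extension), so the product formula $\prod_v (u,L/F)_v = 1$ forces $(u, L/F)_{\mathfrak p} = 1$ as well. By Hasse's norm theorem $u$ is then a global norm, the unit index is $1$, and $\order{\Cplus(L)^{\langle\sigma\rangle}} = \order{\Cplus(F)}$ is odd, completing the argument. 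I expect the main obstacle to be pinning down the precise constant and the correct unit group in the narrow version of the ambiguous class number formula; once that is secured, the product-formula observation is the decisive and clean step.
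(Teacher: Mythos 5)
Your proposal is correct, and while the reduction to $[L:F]=2$ and the forward implication essentially coincide with the paper's argument (your norm-map surjectivity and the paper's compositum of $L$ with the strict Hilbert class field of $F$ are two formulations of the same linear-disjointness fact, both resting on total ramification at $\mathfrak{p}$), your proof of the converse takes a genuinely different route. The paper never invokes the ambiguous class number formula: it notes that the strict Hilbert class field $H^{\textup{st}}$ of $L$ is Galois over $F$, passes to the elementary abelian quotient $H'$ fixed by $2\Gal(H^{\textup{st}}/L)$, uses the fixed-point property of $2$-groups acting on $2$-groups to extract an abelian extension of $F$ of degree $4$ containing $L$ inside $H'$, and then takes the fixed field of the inertia group of the unique ramified finite prime to produce a quadratic extension of $F$ unramified at all finite primes. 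You instead apply the strict analogue of Chevalley's formula together with the product formula for norm residue symbols and the Hasse norm theorem; the formula as you wrote it is indeed the correct narrow version (ramification indices over \emph{finite} places only, ordinary units replaced by totally positive units), so the step you flagged as the main obstacle does go through, and degree $2$ guarantees the cyclicity needed for both Chevalley and Hasse. The trade-off: your approach is quantitative---it shows $[E_F^+ : E_F^+ \cap N_{L/F}L^\times]=1$, hence $\vert\Cplus(L)^{\langle\sigma\rangle}\vert = \vert\Cplus(F)\vert$ exactly, and your forward direction gives the divisibility $\vert\Cplus(F)\vert \mid \vert\Cplus(L)\vert$, both stronger than parity transfer---whereas the paper's argument is more self-contained, needing only basic Galois theory, inertia groups, and the same $2$-group fixed-point lemma (applied to Galois groups rather than to the class group), and avoiding both the ambiguous class number formula and the Hasse norm theorem.
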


\begin{proof}
Note first it suffices to prove the result when $[L:F] = 2$.  Composing the
strict Hilbert class field of $F$ with $L$ gives an extension of the same degree over $L$ that is
unramified at finite primes, so if 2 divides the strict class number of $F$ then 
2 divides the strict class number of $L$. 
Conversely, the strict Hilbert class field $H^{\textup{st}}$ of $L$ is Galois over $F$,
as is the subfield, $H'$, fixed by $2 \Gal(H^{\textup{st}} /L )$, and $H'$ is an elementary abelian 2-extension of
$L$.  Because 2-groups acting on 2-groups necessarily have fixed points, there is a subfield of $H'$
which is an abelian extension
of $F$ of degree 4 containing $L$ as a subfield.  Taking the fixed field of the inertia group for the unique
ramified finite prime in this latter extension gives a quadratic extension of $F$ unramified at
all finite primes, so if 2 divides the strict class number of $L$ then 
2 divides the strict class number of $F$.
\end{proof}


\begin{proof}[Proof of Proposition 1]
Equivalence of (1), (2) and (3): (\cite[p.~576]{Iw2}) 
Let $S(K)$ denote the Sylow 2-subgroup of $\C(K)$, with $S^+(K)$ (the kernel of $1 - J$) and 
minus part $S^-(K)$ (the kernel of $1 + J$).  Then $S^+(K) \cap S^-(K)$ consists of the elements in $S^+(K)$ on
which $1 + J$ acts trivially, i.e., the elements of order 1 or 2 in $S^+(K)$.  Similarly, $S^+(K) \cap S^-(K)$ 
consists of the elements of order 1 or 2 in $S^-(K)$ and it follows that $S^+(K)$ and $S^-(K)$ have the same 2-rank.  In
particular, $S^+(K) = 1$ if and only if $S^-(K) = 1$.  Then $S(K) / S^-(K) = S(K)^{1+J} \subseteq S^+(K)$
shows that $S(K)$ is also trivial if $S^+(K) = S^-(K) = 1$.  
Conversely, $S(K) = 1$ trivially implies  $S^+(K) = S^-(K) = 1$ since $S^+(K)$ and $S^-(K)$ are subgroups of $S(K)$.  
Hence $C(K)$, $C^+(K)$,
and $C^-(K)$ all have the same parity. 

Equivalence of (1) and (4):  
Applying the Lemma to $L = \QQ(\zeta_{p^n})$ and $F = \QQ( \zeta_{p^n} + \zeta_{p^n}^{-1})$ shows that 
2 divides the class number of $K$ (which equals the strict class number as $K$ is complex) if and
only if 2 divides the strict class number of $K^+$.

Conditions (a1) and (a2) are equivalent since $[E:C] = \order{ \C(K^+) }$ (\cite[Theorem 8.2]{Wa}).  To 
see these are equivalent to (a3), note first that $E$ and $C$ are both isomorphic to 
$\ZZ/2\ZZ \times \ZZ^{\phi(p^n)/2 - 1}$ as abelian groups, so 
the groups $E/E^2$ and $C/C^2$ have the same order ($ = 2^{\phi(p^n)/2}$). This together with the isomorphism
$C E^2 / E^2 \iso C/ C \cap E^2$ implies that 
$\order{ E/ C E^2} = [E:E^2] / [C E^2 : E^2] = [C:C^2] /[C : C \cap E^2 ]  = \order{C \cap E^2 / C^2}$.  
Hence $ C \cap E^2 = C^2 $ if and only if $ E = C E^2$.  If  ${\bar E}$ denotes the finite abelian group $E/C$,
then $E = C E^2$ if and only if $\bar E = {\bar E}^2$, which happens if and only if ${\bar E}$ has odd order,
i.e., if and only if $[E:C]$ is odd.  

There are units of $K^+$ of every possible signature if and only if $ [E : E^+] = 2^{\phi(p^n)/2}$, which is
equivalent to $ [ E^+ : E^2] = 1$ since $2^{\phi(p^n)/2} = [E : E^2] = [E : E^+] [ E^+ : E^2]$, so 
conditions (b2) and (b3) are equivalent.  Then $\order{ \Cplus(K^+) } = \order{ \C(K^+) } [E^+ : E^2]$
(for additional details, see \cite[\S 2]{D-V}) 
shows both that (b1) is equivalent to (b2) and that (4) and (5) (in the version (a1) and (b2)) are equivalent.

The two statements in (6) are equivalent since $2^{\phi(p^n)/2} = [C : C^2] = [C : C^+] [ C^+ : C^2]$
so there are circular units of every possible signature (i.e., $[C : C^+] = 2^{\phi(p^n)/2}$) if and only if
$[ C^+ : C^2] = 1$.  Then $C^2 \subseteq C \cap E^2 \subseteq C \cap E^+ = C^+$ shows
that $[ C^+ : C^2] = 1$ if and only if both $C^2 = C \cap E^2$ and $E^+ = E^2$, so (6) is equivalent to
(5) (in the version (a3) and (b2)).  
\end{proof}

\section*{Acknowledgments}
I would like to thank Richard Foote, Hershy Kisilevsky, and Evan Dummit for helpful conversations.

\end{document}